\documentclass[12pt]{amsart}
\usepackage{graphicx}

\usepackage{amsmath, mathrsfs, amssymb, eucal, amsthm, amscd, amsxtra, amsthm, geometry, enumerate,mathtools}
\usepackage{amsrefs}
\usepackage{colonequals}
\usepackage{hyperref}

\usepackage{aliascnt}

\hypersetup{
colorlinks=true,
linkcolor=blue,
citecolor=red,
filecolor=magenta,      
urlcolor=cyan,
pdftitle={Mordell_Curves_Arithmetic_Progression},
pdfpagemode=FullScreen,
}

\usepackage{amsrefs}

\usepackage{cleveref}

\makeatletter

\numberwithin{equation}{section}
\newtheorem{theorem}{\bf Theorem}[section]
\newtheorem{lemma}[theorem]{\bf Lemma}

\newtheorem{claim}[theorem]{\bf Claim}
\theoremstyle{remark}
\newtheorem{rem}[theorem]{\bf Remark}
\newtheorem{exmp}[theorem]{\bf Example}

\DeclareMathAlphabet{\mathcal}{OMS}{cmsy}{m}{n}

\def \Q {{\mathbb{Q}}}
\def \Z {{\mathbb Z}}

\title{Mordell Curves with Ordinates in Arithmetic Progression}
\author[S. A. Alkabouss]{Sakha A. Alkabouss}
\address{Universit\'e Andr\'e Salifou de Zinder, Niger}
\email{sakha.aboussaghidalkabouss@uas.edu.ne}
\author[B. Earp-Lynch]{Benjamin Earp-Lynch}
\author[S. Earp-Lynch]{Simon Earp-Lynch}
\address{Carleton University, Department of Mathematics, Canada}
\email{benjaminearplynch@cmail.carleton.ca}
\email{simonearplynch@cmail.carleton.ca}
\author[O. Kihel]{Omar Kihel}
\address{Brock University, Department of Mathematics and Statistics, Canada}
\email{okihel@brocku.ca}
\begin{document}

\begin{abstract}
We show that Mordell curves with arithmetic progressions in the $y$-coordinate of length $7$ have not been ruled out by previous work, and we give non-isomorphic families of Mordell curves with $y$-arithmetic progressions of length $6$. We also construct parametric families of elliptic curves of moderate rank, with subfamilies possessing rational points in arithmetic progression.
\end{abstract}
\maketitle
\section{Introduction}

Elliptic curves may have rational or integral points lying in arithmetic progression. One may consider an arithmetic progression in either of the coordinates, or in both simultaneously. We will refer to an arithmetic progression in the $x$ or $y$-coordinate of an elliptic curve as an $x$ or $y$-arithmetic progression. The length of an $x$ or $y$-arithmetic progression has some bearing on a curve's algebraic rank. Indeed, it was shown by Garcia-Fritz and Pasten \cite{MR4259152} that there is a lower bound on the rank of an elliptic curve which depends on the maximal length of an arithmetic progression in either of its coordinates.

A Mordell elliptic curve is a curve with Weierstrass form $$E_{k}:\quad y^{2}=x^{3}+k,\quad k\in\Q.$$ Mohanty \cite{MR0612198} conjectured that for $k\in \Z$ the maximum length of an $x$ or $y$-arithmetic progression of $E_{k}$ is $4$. Mohanty's conjecture was later amended by Lee and Velez \cite{MR1200840}, who proved the existence of infinitely many curves with $y$-arithmetic progressions of length $6$ and found examples of such curves. Dey and Maji \cite{MR3555877} investigated arithmetic progressions on Mordell curves $E_{k}$ for $k\in \Q$. In particular, they claimed that $6$ is the maximum possible length of a $y$-arithmetic progression, and constructed a family of curves with such a property.

In this work, we comment on both of the aforementioned claims of Dey and Maji. In \Cref{Claim1}, we show that $y$-arithmetic progressions of length at least $7$ on Mordell curves have not been ruled out. In \Cref{Claim2}, we give a method for constructing nonisomorphic families of Mordell curves containing a $y$-arithmetic progression of length $6$. In \Cref{MordellSec}, we exhibit a parametric family of Mordell curves of rank $4$, some parametric subfamilies of which have ordinates in arithmetic progression. In \Cref{SelmerSec} we discuss a parametric family of Selmer curves of rank $3$.  For this family as well, we may choose parameters so that the resulting curves possess points in arithmetic progression.

\section{$y$-Arithmetic Progressions on Mordell Curves}

We consider the Mordell elliptic curve $$E_{k}:\quad y^{2}=x^{3}+k,\quad k\in\Q.$$  Define $S_{y}(E_{k})$ to be the maximal length of a $y$-arithmetic progression on $E_{k}$.  We address an earlier claim of Dey and Maji \cite[Theorem 2]{MR3555877}, which we state here.
\begin{claim}\cite[Theorem 2]{MR3555877}\label{PreviousThm}
Let $E:y^{2}=x^{3}+k$ be an elliptic curve for some $k\in\Q$.  Then $S_{y}(E)\leq6$.  Moreover, there exist infinitely many such elliptic curves with $S_{y}(E)=6$.
\end{claim}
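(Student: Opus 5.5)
The plan is to turn the statement into a question about rational points on an explicit variety and then handle the two assertions separately. Suppose $E_k$ has points $(x_i,y_i)$, $0\le i\le n-1$, with $y_i=a+id$ and $d\neq 0$. Then $x_i^3=(a+id)^2-k$. The right-hand side is a quadratic polynomial in $i$, so its third finite differences vanish. This eliminates $a$, $d$ and $k$ at once:
$$x_i^3-3x_{i+1}^3+3x_{i+2}^3-x_{i+3}^3=0,\qquad 0\le i\le n-4.$$
Conversely, any rational solution $(x_0:\dots:x_{n-1})$ of this system determines a quadratic $Q(i)$ with $Q(i)=x_i^3$. It yields a genuine $y$-progression, after rescaling by $(x,y)\mapsto(u^2x,u^3y)$ (which sends $k\mapsto u^6k$), exactly when the leading coefficient of $Q$ is a nonzero square $d^2$. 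The conditions are homogeneous, so the rescaling can be absorbed. Thus $S_y(E_k)\ge n$ for some $k\in\Q$ if and only if the intersection $V_n\subset\mathbb{P}^{n-1}$ of these $n-3$ diagonal cubics has a rational point satisfying the square condition on the second difference. That second difference is $x_{i+2}^3-2x_{i+1}^3+x_i^3=2d^2$. We must also exclude degenerate points, where $d=0$ or several $y_i$ coincide.

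For the existence half ($n=6$), I would look for rational curves on the surface $V_6$, the intersection of three cubics in $\mathbb{P}^5$. First I would impose extra linear relations, for instance $x_j=\lambda x_i$ for a suitable pair of indices, or a symmetric choice of the progression about $0$. The aim is to cut $V_6$ down to a curve of genus $0$ or $1$. I would take the Lee--Velez examples as base points and find such a curve through them. A genus-$0$ component, or a positive-rank genus-$1$ component, then gives infinitely many points. Finally I would check two things. First, the square condition holds along the family, possibly after a quadratic base change in the parameter. Second, the resulting $k$ are not all equal modulo sixth powers, so that infinitely many non-isomorphic curves arise.

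The upper bound $S_y(E)\le 6$ amounts to showing that every rational point of the surface $V_7$ is degenerate. Here $V_7$ is the intersection of four cubics in $\mathbb{P}^6$, a surface that is presumably of general type. I expect this to be the main obstacle, and indeed the real content. The step I would try first is to project $V_7$ to lower-dimensional quotients, for instance by forgetting coordinates to land on copies of $V_6$ or on genus-one fibrations. The goal is to reduce to curves whose rational points can be determined by Chabauty or elliptic-curve rank computations. However, no general method determines all rational points on such a surface. So unless the projection forces everything into finitely many computable curves, this step will not close, and any purported proof must be checked carefully at exactly this point. That is where I would test whether the argument of \cite{MR3555877} actually covers all rational points of $V_7$, or only a special locus of them.
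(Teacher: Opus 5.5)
\textbf{The gap: the upper bound.} The bound $S_y(E)\le 6$ is a genuine gap, and nothing in your plan closes it. The paper does not prove this claim either. Its point is that the first assertion has no valid proof.

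In \cite{MR3555877}, a seven-term progression is reduced to rational solutions of simultaneous equations $x^3+y^3+cz^3=c$. Bremner's theorem (\Cref{BremnerThm}) is then used as if it listed all rational points of these surfaces. It does not. It lists only the polynomial parametric solution curves of arithmetic genus $0$. Bremner himself constructs infinitely many parametric families of higher genus on the same surfaces, and sporadic points are not addressed at all.

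This is exactly the failure you predicted: an argument that covers only a special locus of the rational points of $V_7$. Your diagnosis agrees with the paper, but it means the statement as written cannot be asserted. The equality in the second sentence inherits the same problem. Proving $S_y(E)=6$ for your curves would require the missing upper bound, so what you can actually establish is $S_y(E)\ge 6$.

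\textbf{The existence half.} Here your plan is essentially the paper's construction (\Cref{infmanymordell}). Take the symmetric slice you mention, with ordinates $\pm r,\pm 3r,\pm 5r$ and abscissae $d,Ud,Vd$. One of the three cubics defining $V_6$ then vanishes identically, and the other two both become the plane cubic $C: 3U^3-V^3=2$, i.e.\ $V^3-1=3(U^3-1)$. This curve has the degenerate point $(1,1)$ (where $N=0$) and non-degenerate points such as $(1/4,-5/4)$. The remaining condition $(U^3-1)d^3=8r^2$ is met by $d=Nq^2$ and $r=N^2q^3$, with $N=(U^3-1)/8$. So no base change is needed.

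A small correction to your setup: the square condition is always absorbed by scaling the projective point, $x_i\mapsto\mu x_i$. This multiplies the leading coefficient of $Q$ by $\mu^3$, so taking $\mu$ equal to that coefficient makes it a square. The rescaling $(x,y)\mapsto(u^2x,u^3y)$ that you cite multiplies it only by $u^6$, so it cannot change its square class.

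Non-isomorphism is then checked as you suggest. Set $A(P)=N^3(N-1)$ and verify $A(P')/A(P)\notin\Q^6$ for different points $P,P'\in C(\Q)$.
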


We comment on both parts of the statement of \Cref{PreviousThm}.
\subsection{First part of \Cref{PreviousThm}}\label{Claim1}
\Cref{PreviousThm} states that there is no Mordell curve possessing seven points in $y$-arithmetic progression.  However, the proof is incorrect.  The authors point out that for there to be $7$ points in $y$-arithmetic progression on a Mordell curve $E_{k}$, there must be a rational solution to one or more equations of the form
\begin{equation}\label{DiagonalCubic}
x^{3}+y^{3}+cz^{3}=c,\quad c\in\Q,\quad c\neq0,1,2.
\end{equation}  They then employ the following theorem of Bremner \cite[Theorem 1]{MR0958252}, to simultaneous equations of the form \eqref{DiagonalCubic}.  We have modified the statement slightly to suit our requirements.  In its original form, the theorem lists two additional families of solutions in the case $c=2$.
\begin{lemma}\cite[Theorem 1]{MR0958252}\label{BremnerThm}
The only rational solution curves for the surface
\begin{equation}\label{BremnerEq}
V:x^{3}+y^{3}+cz^{3}=c\quad (c\neq1,2)
\end{equation} are given up to symmetry by
\begin{align*}
(i)\quad (x,y,z)=\left(\lambda, -\lambda, 1\right) \quad \text{and}\quad (ii)\quad (x,y,z)=\left(3\lambda-\frac{9}{c}\lambda^{4},\frac{9}{c}\lambda^{4},1-\frac{9}{c}\lambda^{3}\right),
\end{align*}
for $\lambda\in\Q$.
\end{lemma}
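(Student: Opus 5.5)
The plan is to treat \eqref{BremnerEq} as the projective cubic surface $X^{3}+Y^{3}+cZ^{3}=cW^{3}$. It is smooth for $c\neq0$, and it is considered over the function field $K=\Q(c)$, or equivalently for generic $c$. I read "rational solution curves" in the sense of Bremner's statement: curves parametrized over $K$ whose parametrization does not depend on special values of $c$, with the degree bounded as in \cite{MR0958252}. An unrestricted classification of all rational curves cannot hold, since the chord--tangent process produces rational curves of unbounded degree. So the first step is to fix this precise class of parametrizations, and everything below stays inside it.

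First I will compute the Galois action of $\mathrm{Gal}(\overline{K}/K)$ on the $27$ lines of the surface. These are the lines $X=-\omega Y,\ Z=\omega' W$ and their permutations, with $\omega,\omega'$ ranging over the cube roots of unity, twisted by $\sqrt[3]{c}$ where the coordinate $Z$ pairs with $X$ or $Y$. Only the line $X+Y=0,\ Z=W$ is fixed, and it gives family (i). From this action I will compute the invariant Picard lattice $\mathrm{Pic}(\overline{S})^{G}$. I expect it to have small rank, generated by the hyperplane class $H$, the rational line $L$, and the sums of Galois orbits of lines.

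A curve defined over $K$ has a Galois-invariant class $D=aH+bL+\cdots$. Its degree is $D\cdot H$ and its arithmetic genus is $1+\tfrac12(D^{2}+D\cdot K_{S})$ with $K_{S}=-H$. I will enumerate all invariant classes of genus $0$ within the degree bound and check which ones contain irreducible curves defined over $K$. The expected outcome is the class of $L$ itself, of degree $1$, and a degree-$4$ class. The latter should be realized by the residual intersection of the surface with the pencil of quadrics through $L$ and the tangent-plane data at the rational point $(0,0,1)$.

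The next step is to realize that degree-$4$ class explicitly. Substituting $x=3\lambda-9\lambda^{4}/c,\ y=9\lambda^{4}/c,\ z=1-9\lambda^{3}/c$ and expanding confirms that family (ii) lies on the surface. I will then show that any two curves in this class differ only by the symmetries of the surface: swapping $x,y$, and the cube-root-of-unity actions that are defined over $K$. This gives the uniqueness "up to symmetry". The cases $c=1,2$ are excluded because extra lines or points, such as $(1,1,1)$ when $c=2$, become rational, which enlarges $\mathrm{Pic}^{G}$. That is consistent with the omitted families.

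The main obstacle is the existence and uniqueness check for each candidate class, namely showing that the remaining genus-$0$ classes contain no irreducible curve defined over $K$. To handle it, I would try first a direct computation: write a general polynomial parametrization of the bounded degree with undetermined coefficients in $K$, substitute into \eqref{BremnerEq}, and compare coefficients of powers of $\lambda$. The aim is to show that the resulting algebraic system forces either (i) or (ii).
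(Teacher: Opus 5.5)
\textbf{Gap: the polynomial condition is missing.} The paper gives no proof of this lemma. It quotes Bremner, and the paragraph after the statement says what the result covers: \emph{polynomial} parametrizations whose image curves have arithmetic genus $0$.

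Your Galois and Picard set-up is sound. $\mathrm{Pic}(S)$ has rank $3$, spanned by $H$, $L$ and $T=\sum_{\alpha^3=c}\{X=\alpha W,\ Y=-\alpha Z\}$.

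The gap is that you replace polynomiality by an unspecified degree bound. Under that reading the statement is false, so your enumeration cannot give the outcome you predict. Two examples show this.
\begin{itemize}
\item The plane $x+y=c(1-z)$ meets $V$ in $L$ and the conic $x^2-xy+y^2=1+z+z^2$. This conic is smooth for $c\neq\pm1$ and passes through $(1,-1,1)$. So it is a degree-$2$ curve of arithmetic genus $0$ in the invariant class $H-L$, rationally parametrized over $\Q(c)$, and not equivalent to (i) or (ii). Your predicted outcome (only $L$ and one degree-$4$ class) already fails here.
\item Family (ii) lies in $|L+T|$, and $h^0(L+T)=1+\tfrac12(L+T)\cdot(L+T-K_S)=4$. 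So this class contains a whole $\mathbb{P}^3(K)$ of quartics, and your claim that its members differ only by symmetries is false. In fact $x\leftrightarrow y$ does not preserve the class; it swaps $T$ with the other orbit of three skew lines.
\end{itemize}

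\textbf{The missing idea: the condition at infinity.} A polynomial curve $\Gamma$ of degree $d$ meets $W=0$ only in $dQ$, for one rational point $Q$ of $C_\infty\colon X^3+Y^3+cZ^3=0$. For $D=H-L$ and $D=L+T$ one has $h^0(D-H)=0$, and both sides have equal dimension. Hence restriction $H^0(S,D)\to H^0(C_\infty,D|_{C_\infty})$ is an isomorphism, and $\Gamma$ is determined by $Q$.

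Moreover $dQ\sim D|_{C_\infty}\sim dP_0$, where $P_0=(1:-1:0:0)$ is the Eckardt point on $L$. So $Q-P_0$ is rational $4$-torsion on $C_\infty$. It can be nonzero only if $C_\infty$ has a rational $2$-torsion point, that is, only if $c\in 2\Q^{*3}$.

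At $Q=P_0$, the conic class yields only the reducible residual of $L$ in the tangent plane $X+Y=0$, and $L+T$ yields (ii). This step is the real content of the theorem, and your plan does not contain it.

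\textbf{The degree bound.} You need not borrow a degree bound; Bremner's hypothesis contains none, and your own lattice supplies it once you carry out the enumeration.
\begin{itemize}
\item An irreducible genus-$0$ curve over $K$ other than $L$ has nef class.
\item The only isotropic nef ray is $H-L$. Infinitely many genus-$0$ classes could accumulate along it only if $(H-L)\cdot D=1$.
\item That is impossible. For invariant $D$ one has $(H-L)\cdot D=2\,D\cdot\ell$, where $\ell$ is either conjugate line $X+Y=0$, $Z=\omega^{\pm1}W$.
\item The remaining inequalities leave exactly $H-L$, $L+T$ and its mirror.
\end{itemize}

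\textbf{Fixed $c$ versus $\Q(c)$, and the case $c=2$.} Working over $\Q(c)$ is not equivalent to the lemma, which concerns each fixed $c$. The lattice is the same for all $c\notin\Q^{*3}$, but the step at infinity depends on $C_\infty(\Q)$.

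That is exactly why $c=2$ is special, and your explanation of it is wrong. $\mathrm{Pic}^G$ does not grow, since $2$ is not a cube, and $(1,1,1)\notin V$ when $c=2$. Instead, $(1:1:-1:0)$ is rational $2$-torsion at infinity, and it carries the polynomial conic $(1+s-s^2,\,1-s-s^2,\,s^2)$.

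Finally, $(x,y,z)\mapsto(x/t,y/t,z)$ identifies the surfaces for $ct^3$ and $c$. So the honest exclusion is $c\notin\Q^{*3}\cup2\Q^{*3}$, not just $c\neq1,2$. For example, $(2+2s-2s^2,\,2-2s-2s^2,\,s^2)$ lies on the surface for $c=16$.
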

\noindent We note that in earlier works \cites{MR0958252,MR3555877}, the family $(ii)$ is written incorrectly.

In the proof of \Cref{PreviousThm}, it was assumed that \Cref{BremnerThm} gives all solutions to the Equations \eqref{BremnerEq}.  This interpretation of \Cref{BremnerThm} is incorrect.  Quite aside from not addressing the possible existence of sporadic solutions that do not belong to any parametric family, there are in fact infinitely many parametric families of solutions.  It was noted in the abstract of Bremner's article \cite{MR0958252} that, while the surfaces $x^{3}+y^{3}+cz^{3}=c$ only contain the $2$ (or $4$) listed polynomial parametric solutions corresponding to curves of arithmetic genus $0$ on the surface, they contain infinitely many polynomial parametric solutions corresponding to curves of arithmetic genus greater than $0$.  This is later proven explicitly in an unnumbered theorem on the final pages of the same article \cite{MR0958252}.  In the proof of that theorem, the author gives an example of a recursively-defined infinite family of parametric solutions.  Since the proof of \Cref{PreviousThm} \cite[Theorem 2]{MR3555877} relies on this erroneous assumption, we conclude that the existence of a Mordell curve possessing seven points with $y$-coordinates in arithmetic progression has not been ruled out.

\subsection{Second part of \Cref{PreviousThm}}\label{Claim2}
The second statement made in \Cref{PreviousThm} is that there are infinitely many Mordell elliptic curves with $6$ points in $y$-arithmetic progression.  This is true, however the previous method of proof \cite[Theorem 2]{MR3555877} only pointed to one distinct isomorphism class of such curves.
\begin{rem}
$E_{k}$ and $E_{k'}$ are isomorphic over $\Q$ if and only if $\frac{k'}{k}\in\Q^{6}$.
\end{rem}
We outline here a method to obtain infinitely-many non-isomorphic Mordell curves possessing a $y$-arithmetic progression of length $6$.

Consider the cubic $$C:\quad 3U^{3}-V^{3}=2.$$  It contains the rational point $(1,1)$, so it is an elliptic curve.

\begin{theorem}\label{infmanymordell}
Let $(U,V)\in C(\Q)$, with $U\neq1$.  Let $N=\frac{U^{3}-1}{8}\in\Q$, and for any $q\in\Q$ define $$d=Nq^{2},\quad r=N^{2}q^{3},\quad k=r^{2}-d^{3}=N^{3}(N-1)q^{6}.$$  Then the elliptic curve $$E_{k}:\quad y^{2}=x^{3}+k$$ contains the six rational points $$(d,\pm r),\quad (Ud,\pm3r),\quad (Vd,\pm5r)$$ whose $y$-coordinates form an arithmetic progression of length six.
\end{theorem}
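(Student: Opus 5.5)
The plan is a direct verification: each of the six points is checked against the curve equation, and the arithmetic progression and nondegeneracy are confirmed. The only identities needed are $8N=U^{3}-1$, which is the definition of $N$, and the curve relation $V^{3}=3U^{3}-2$ from $C$.

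First I check the formula for $k$: $r^{2}-d^{3}=N^{4}q^{6}-N^{3}q^{6}=N^{3}(N-1)q^{6}$. Since $k=r^{2}-d^{3}$ by definition, the points $(d,\pm r)$ lie on $E_{k}$ at once.

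Next, for a point $(ad,\pm mr)$ with $a,m\in\Q$, the condition $m^{2}r^{2}=a^{3}d^{3}+k$ is equivalent to $(m^{2}-1)r^{2}=(a^{3}-1)d^{3}$. Substituting $r^{2}=N^{4}q^{6}$ and $d^{3}=N^{3}q^{6}$, and dividing by $N^{3}q^{6}$ (nonzero, see below), this becomes $(m^{2}-1)N=a^{3}-1$.
\begin{itemize}
\item For $(a,m)=(U,3)$ the condition reads $8N=U^{3}-1$, which is the definition of $N$.
\item For $(a,m)=(V,5)$ it reads $24N=V^{3}-1$. Using $V^{3}=3U^{3}-2$ we get $V^{3}-1=3(U^{3}-1)=24N$, as required.
\end{itemize}

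Finally I verify nondegeneracy, which is the only point needing care. I assume $q\neq0$, as is clearly intended. The hypothesis $U\neq1$ gives $U^{3}\neq1$ for rational $U$, so $N\neq0$, and hence $d\neq0$ and $r\neq0$. For $E_{k}$ to be an elliptic curve we need $k\neq0$, i.e.\ $N\neq1$. This is equivalent to $U^{3}\neq9$, which holds because $9$ is not a rational cube.

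The $y$-coordinates $-5r,-3r,-r,r,3r,5r$ then form an arithmetic progression with common difference $2r\neq0$. In particular the six values, and hence the six points, are distinct, giving an arithmetic progression of length six.
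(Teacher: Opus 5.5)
Your proof is correct and is essentially the paper's argument: both reduce membership of $(Ud,\pm3r)$ and $(Vd,\pm5r)$ in $E_k$ to the identities $U^{3}-1=8N$ and $V^{3}-1=3(U^{3}-1)=24N$, together with $Nd^{3}=r^{2}$. Your nondegeneracy check fills in details the paper leaves implicit, namely $q\neq0$, $N\neq0$ from $U\neq1$, and $N\neq1$ because $9$ is not a rational cube, so that $k\neq0$ and $r\neq0$. In particular, you correctly point out that the case $q=0$, which the statement as written allows, must be excluded.
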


\begin{proof}
The points $(d,\pm r)$ are on $E_{k}$ by definition.  Substituting $x=Ud$ and $k=r^{2}-d^{3}$ in the right side of $E_{k}$ gives
$$
U^{3}d^{3}+r^{2}-d^{3}=8Nd^{3}+r^{2}=9r^{2}=(\pm3r)^{2}.
$$
Similarly, substituting $x=Vd$ yields $$V^{3}d^{3}+r^{2}-d^{3}=24Nd^{3}+r^{2}=25r^{2}=(\pm5r)^{2}.$$  So we have verified that $E_{k}$ contains the six listed rational points.  Their $y$-coordinates form the arithmetic progression $-5r,-3r,-r,r,3r,5r$ of length $6$.
\end{proof}
The proof of \Cref{PreviousThm} \cite[Theorem 2]{MR3555877} uses a specific point $(U,V)=\left(\frac{1}{4},\frac{-5}{4}\right)\in C$, and constructs a family of curves with six points in $y$-arithmetic progression.  Our \Cref{infmanymordell} is more general in that it shows that any rational point on $C$ produces such a family.

Let $P=(U,V)\in C(\Q)$, and define $A(P)=N^{3}(N-1)\in\Q$, and $$E_{P,q}:\quad y^{2}=x^{3}+A(P)q^{6}.$$  The curves corresponding to $P$ and $P'$ are isomorphic if and only if $$\frac{A(P')}{A(P)}\in\Q^{6}.$$

\begin{exmp}
The proof of \Cref{PreviousThm} \cite[Theorem 2]{MR3555877} fixed the point $P_{0}=\left(\frac{1}{4},\frac{-5}{4}\right)=(U,V)$.  This leads to $$N_{0}=\frac{U^{3}-1}{8}=\frac{\left(1/4\right)^{3}-1}{8}=\frac{-63}{512},$$ and $$A(P_{0})=N_{0}^{3}(N_{0}-1)=\frac{143777025}{68719476736}.$$  So $$E_{P_{0},q}:\quad y^{2}=x^{3}+\frac{143777025}{68719476736}q^{6}$$ contains a $y$-progression of length $6$.  Taking $q'=\frac{32}{3}q$ then gives the same curves as were found in the proof of \Cref{PreviousThm} \cite[Theorem 2]{MR3555877}.
\end{exmp}

\begin{exmp}
We produce another rational point $P_{1}=\left(-\frac{253}{488},-\frac{655}{488}\right)\in C(\Q)$, which leads to $$N_{1}=\frac{U_{1}^{3}-1}{8}=-\frac{132408549}{929714176}.$$  Then $$A(P_{1})=N_{1}^{3}(N_{1}-1)=\frac{2465600901357492654209519114711025}{747132815722366705449761064223768576}$$ and we obtain $$E_{P_{1},q}:\quad y^{2}=x^{3}+A(P_{1})q^{6}.$$
\end{exmp}

\begin{exmp}
Using the tangent to the point $P_{1}$, we construct another rational point $$P_{2}=\left(\frac{129900299507}{160841972528},-\frac{120418942015}{160841972528}\right)\in C(\Q).$$  We can then compute $$N_{2}=\frac{U_{2}^{3}-1}{8}=-\frac{1969055306866558092207723045065109}{33288034940430841204248215754735616}$$ and $A(P_{2})$.  We do not reproduce the latter since its height is too large for it to fit within the margins.
\end{exmp}

We constructed $3$ families of elliptic curves and we can verify that these families are not isomorphic over $\Q$ by checking that $$\frac{k'}{k}=\frac{A(P')}{A(P)}\not\in\Q^{6}.$$

We now demonstrate a pair of elementary constructions of Mordell elliptic curves of moderate rank.

\section{A Rank 4 Family of Mordell Curves}\label{MordellSec}

Start with a curve $$E:\quad y^2=lx^3+k$$ and consider four distinct rational points $(z,q),(w,r),(v,s)$ and $(u,t)$ on $E$. We write:
\begin{align*}
r^2=lw^3+k,\quad s^2=lv^3+k,\quad t^2=lu^3+k \quad \text{and} \quad q^2=lz^{3}+k.
\end{align*}
Solving for $l$ and $k$ gives $$l=\frac{t^2-s^2}{u^3-v^3}\quad \text{and} \quad k=\frac{s^2u^3-t^2v^3}{u^3-v^3}.$$  Hence
$$
r^{2}=\frac{t^2-s^2}{u^3-v^3}w^{3}+\frac{s^2u^3-t^2v^3}{u^3-v^3}\quad \text{and}\quad q^{2}=\frac{t^2-s^2}{u^3-v^3}z^{3}+\frac{s^2u^3-t^2v^3}{u^3-v^3},
$$
and rearranging these equations yields
\begin{align*}
&(r^{2}-s^{2})u^{3}+(s^{2}-t^{2})w^{3}+(t^{2}-r^{2})v^{3}=0 \quad \text{and}\\
&(q^{2}-s^{2})u^{3}+(s^{2}-t^{2})z^{3}+(t^{2}-q^{2})u^{3}=0.
\end{align*}
From this, we get two equations
\begin{align}
&(r^{2}-q^{2})u^{3}+(s^{2}-t^{2})(w^{3}-z^{3})+(q^{2}-r^{2})v^{3}=0,\quad\text{and}\label{PreBrem1}\\
&(r^{2}+q^{2}-2s)u^{3}+(s^{2}-t^{2})(w^{3}+z^{3})+(2t^{2}-r^{2}-q^{2})v^{3}=0.\label{PreBrem2}
\end{align}
Provided $v\neq 0$, equation \eqref{PreBrem1} can be rearranged to obtain \begin{equation}\label{BremMordellEqn}
\left(\frac{w}{v}\right)^{3}+\left(\frac{-z}{v}\right)^{3}+\left(\frac{r^{2}-q^{2}}{s^{2}-t^{2}}\right)\left(\frac{u}{v}\right)^{3}=\left(\frac{r^{2}-q^{2}}{s^{2}-t^{2}}\right).
\end{equation}
Write $c\colonequals\frac{r^{2}-q^{2}}{s^{2}-t^{2}}$. If $r^{2}-q^{2}\neq 0$ and $\lambda \in \Q$, then $$\left(\frac{w}{v}\right)=3\lambda-\frac{9}{c}\lambda^{4}, \quad \left(\frac{z}{v}\right)=\frac{-9}{c}\lambda^{4},\quad \text{and} \quad \left(\frac{u}{v}\right)=1-\frac{9}{c}\lambda^{3}$$ is a parameterized family of solutions to equation \eqref{BremMordellEqn} (see \Cref{BremnerThm}).  Substituting this into equation \eqref{PreBrem2} gives: $$\left(r^{2}+q^{2}-2s^{2}\right)\left(1-\frac{9}{c}\lambda^{3}\right)^{3}+\left(s^{2}-t^{2}\right)\left(\left(3\lambda-\frac{9}{c}\lambda^{4}\right)^{3}-\left(\frac{9}{c}\lambda^{4}\right)^{3}\right)=r^{2}+q^{2}-2t^{2},$$ whence \begin{equation}\label{preAPeqn}\left(r^{2}+q^{2}-2s^{2}\right)\left(\left(1-\frac{9}{c}\lambda^{3}\right)^{3}-1\right)+\left(s^{2}-t^{2}\right)\left(\left(3\lambda-\frac{9}{c}\lambda^{4}\right)^{3}-\left(\frac{9}{c}\lambda^{4}\right)^{3}-2\right)=0.\end{equation}

Now suppose that the following relationship holds between $q,r,s$ and $t$:  $$q=t+\alpha,\quad r=t+\beta,\quad s=t+(\alpha+\beta),$$ for rational numbers $\alpha,\beta$.  Then we can eliminate $t$ from $c$, obtaining $$c=\frac{r^2-q^2}{s^2-t^2}=\frac{\beta-\alpha}{\alpha+\beta},$$ as well as
\begin{align*}
r^2+q^2-2t^2&=2(\alpha+\beta)t+\alpha^{2}+\beta^{2},\\
r^2+q^2-2s^2&=-2(\alpha+\beta)t-\alpha^{2}-\beta^{2}-4\alpha\beta,\text{ and}\\
s^2-t^2&=2(\alpha+\beta)t+(\alpha+\beta)^{2}.
\end{align*}
Using \eqref{preAPeqn} to solve for $t$ gives $t=\frac{(\alpha+\beta)g-(\beta+2\alpha)h}{2(h-g)}$, where
\begin{align*}
g&=729(\alpha+\beta)^3\lambda^{12}+(\beta-\alpha)^3\text{ and}\\
h&=27\beta(27( \alpha + \beta )^2\lambda^9 - 9(\beta^2-\alpha^2)\lambda^6 + (\beta-\alpha)^2\lambda^3).
\end{align*}
It follows that the points $\left(z,t+\alpha\right),\left(w,t+\beta\right),\left(v,t+\alpha+\beta\right)$ and $\left(u,t\right)$ all lie on the elliptic curve $$E_{\alpha,\beta,\lambda}:\quad y^{2}=lx^3+k,$$ for $$l=\frac{2\beta t + \beta ( \alpha + \beta )}{h},\text{ and }k=\frac{ht^2 + ( 2t + \alpha + \beta ) ( h (\alpha+\beta)-\beta(\beta-\alpha)^3))}{h},$$ where
\begin{align*}
u &= (\beta-\alpha)-9(\alpha+\beta)\lambda^{3},& v&=\beta-\alpha,\\
w&=3(\beta-\alpha)\lambda-9(\alpha+\beta)\lambda^{4},&z&=-9(\alpha+\beta)\lambda^{4},
\end{align*} for rational parameters $\alpha,\beta,\lambda$.

Performing the change of variables $$y=\frac{Y}{l},\quad x=\frac{X}{l},$$ and substituting $(\alpha,\beta,\lambda)=(2,-1,1)$, we obtain the curve $$E_{2,-1,1}:\quad Y^{2}=X^{3}+73/5082121521$$ with the four points
\begin{align*}
(u,t)&=\left(\frac{8}{801}, \frac{215}{213867}\right),&(v,s)&=\left(\frac{2}{801}, \frac{37}{213867}\right),\\
(w,r)&=\left(\frac{4}{267}, \frac{131}{71289}\right),&(z,q)&=\left(\frac{2}{267}, \frac{-47}{71289}\right).
\end{align*}
We find that the regulator of these four points is approximately $37.23$, so they are independent, and the rank of $E_{2,-1,1}$ is at least $4$ (indeed, we verify with SageMath \cite{sagemath} that the rank is exactly $4$ in this case).  By the specialization theorems of N\'eron and Silverman \cite[IV\S11]{MR1312368}, the generic rank of $E_{\alpha,\beta,\lambda}$ is at least $4$.  For certain choices of parameters, for example $\alpha=2d,\beta=-d$, this is an example of a parametric family of elliptic curves with four points whose ordinates are in arithmetic progression.

\section{A Rank 3 Family of Selmer Curves}\label{SelmerSec}
Start with a Selmer curve
\begin{equation}\label{SelmerStart}
ax^3+by^3=cz^3,
\end{equation} where $a,b,c\in\Q$.  Dividing \eqref{SelmerStart} by $cy^3$, we get $$\frac{a}{c}\left(\frac{x}{y}\right)^{3}+\frac{b}{c}=\left(\frac{z}{y}\right)^{3}.$$  For the choice of variables $X=\frac{x}{y}$ and $Y=\frac{z}{y}$, we then have the curve
\begin{equation}\label{SelmerSecond}
Y^{3}=AX^{3}+B,
\end{equation}
where $A=\frac{a}{c}$ and $B=\frac{b}{c}$.  Suppose that this curve has the three rational points $(X,Y)=(r,w),(s,v)$ and $(t,u)$.  We obtain three equations
\begin{align}
w^{3}&=Ar^{3}+B,\label{rweq}\\
v^{3}&=As^{3}+B,\label{sveq}\\
u^{3}&=At^{3}+B.\label{tueq}
\end{align}
Equation \eqref{tueq} gives $B=u^{3}-At^{3}$, which we substitute into \eqref{sveq} to get $$A=\frac{v^{3}-u^{3}}{s^{3}-t^{3}}.$$  Substituting this expression into the expression for $B$ gives $$B=\frac{s^{3}u^{3}-t^{3}v^{3}}{s^{3}-t^{3}},$$ and so $$w^{3}=\left(\frac{v^{3}-u^{3}}{s^{3}-t^{3}}\right)r^{3}+\frac{s^{3}u^{3}-t^{3}v^{3}}{s^{3}-t^{3}},$$  whence $$(s^3-t^3)w^{3}=(r^3-t^3)v^{3}+(s^3-r^3)u^{3}.$$  If $v\neq0$, this implies that $(\frac{u}{v},\frac{w}{v})$ is a point on the curve $$C:d_{1}Y_{1}^{3}+d_{2}X_{1}^{3}=d_{1}+d_{2},$$ for $d_{1}=s^{3}-t^3$ and $d_{2}=r^3-s^3$.  The tangent line to this curve at the point $(1,1)$ has equation $$d_{1}Y_{1}+d_{2}X_{1}=d_{1}+d_{2},$$ and also meets the curve $C$ at the point $$(X_{1},Y_{1})=\left(-\frac{2d_1 + d_2}{d_1 - d_2},\frac{d_{1}+2d_{2}}{d_{1}-d_{2}}\right),$$ and so we can take
\begin{align*}
u = 2t^3-r^3-s^3,\\
v = 2s^3-r^3-t^3,\\
w = 2r^3-s^3-t^3.
\end{align*}
Substituting these values into the expressions for $A$ and $B$, we get
\begin{align*}
A&=9( r^6 - r^3s^3 + s^6 - r^3t^3 - s^3t^3 + t^6 ),\\
B&=-(r+s+t)(r^2 + 2rs + s^2 - rt - st + t^2)(r^2 - rs + s^2 + 2rt - st + t^2)\\&\quad\hspace*{4.75mm}(r^2 - rs + s^2 - rt + 2st + t^2)(r^2 - rs + s^2 - rt - st + t^2).
\end{align*}
Specializing to the values $r=1,s=2,t=3$ in \eqref{SelmerSecond} gives the curve $$\left(\frac{Y}{18}\right)^3=181\left(\frac{X}{6}\right)^3-7.$$  Multiplying by $181^2\left(\frac{Y}{18}\right)^{3}$ gives $$\left(181\left(\frac{Y}{18}\right)^{3}\right)^{2}+7\cdot181\left(181\left(\frac{Y}{18}\right)^{3}\right)=\left(\frac{181XY}{108}\right)^{3}.$$  Setting $X_{0}=\frac{181XY}{108}$ and $Y_{0}=181\left(\frac{Y}{18}\right)^{3}$ then gives the elliptic curve in Weierstrass form: $$E_{1,2,3}:\quad Y_{0}^{2}+7\cdot181Y_{0}=X_{0}^{3},$$ which has the points $$(X_{0},Y_{0})=\left(\frac{-1991}{36},\frac{-240911}{216}\right),\left(\frac{-362}{9}, \frac{-1448}{27}\right),\left(\frac{905}{4}, \frac{22625}{8}\right).$$  These points have regulator equal to approximately $45.0471$ and thus are independent.

In general, the curves
\begin{equation}\label{WeierstrassForm}
E_{r,s,t}:\quad y^{2}-ABy=x^3
\end{equation}
are related by a 3-isogeny to the curves in \eqref{SelmerSecond} and, by the specialization theorems of N\'eron and Silverman \cite[IV\S11]{MR1312368}, have three independent points over $\Q(r,s,t)$.  Since isogenies preserve rank, it follows that the family of Selmer curves \eqref{SelmerSecond} will have generic rank at least 3.

\subsection{Specialization and Rank Bounds for $E_{r,s,t}$ }
We include some remarks which point to why these curves might be of interest. 
Specializing to, for example, $(r,s,t)=(u-1,u+1,u)$, we obtain the family
$$E_{u}:\quad Y^2+a(u)Y=X^3$$
over $\Q(u)$, where
$$a(u)=9u^{3}(9u^{4}+9u^{2}+1)(u^{2}+u+1)(u^{2}-u+1).$$ 
The $j$-invariant of $E_{u}$ is $j=0$ and so the family of elliptic curves is isotrivial: the fibres are all isomorphic over $\overline{\Q}$ to a fixed elliptic curve $$y^{2}=x^{3}+b,\quad b\text{ fixed},$$ their endomorphism group contains $\Z[\zeta_{3}]$ and so they have complex multiplication by $\Z[\zeta_{3}]$.

Using the change of variables $Y=y'-\frac{a(u)}{2}$, we obtain
$$y'^{2}=x^3+b(u)\quad\text{for}\quad b(u)=\left(\frac{a(u)}{2}\right)^{2}.$$
Writing $$x=c^{2}x'\quad\text{and}\quad y'=c^{3}y''$$ for $$c=\left(b(u)\right)^{1/6},$$ we obtain a curve isomorphic to $$y''^{2}=x'^{3}+1$$ 
over $\overline{\Q(u)}$, and all the fibres are isomorphic over $\overline{\Q(u)}$.

\begin{rem} It is known that isotrivial families may have a very large rank. \end{rem}

The standard Weierstrass quantities for $E_u$ are $c_4 = 0$ and $c_6 = -54a(u)^2$, giving the discriminant of $E_{u}$ as 
$$\Delta=-\frac{27}{16}(a(u))^{4},$$ 
so $E_{u}$ may have bad reduction when $a(u)=0$. 
The formula of Shioda-Tate implies that
$$\operatorname{rank}\left(E\left(\Q(u)\right)\right)=\rho\left(\operatorname{NS}E_{u}\right)-2-\sum\limits_{v}\left(m_{v}-1\right),$$
where
\begin{itemize}
\item{} $\rho\left(\operatorname{NS}E_{u}\right)$ is the rank of the N\'{e}ron-Severi group;
\item{} the $v$ are given by the places where $E_{u}$ has bad reduction;
\item{} $m_{v}$ is the number of components of the fibre of $E_{u}$ at $v$.
\end{itemize}

At $u=0$, $v_0(a(u))=3$, so $v_0(\Delta)=12$ and $v_0(c_6)=6$. The Weierstrass model is not minimal at this place. Applying the change of variables $X \to u^2 X$ and $Y \to u^3 Y$ gives a new minimal discriminant with valuation $0$, meaning $E_u$ has good reduction at $u=0$. Thus, the distinct places of bad reduction are
\begin{itemize}
\item{} the $4$ roots of $9u^{4}+9u^{2}+1=0$,
\item{} the $2$ roots of $u^{2}+u+1=0$,
\item{} the $2$ roots of $u^{2}-u+1=0$,
\item{} and $u=\infty$.
\end{itemize}

There are $9$ distinct singular fibres. Since $v(\Delta)=4$ and $v(c_6)=2$ at each of these finite places (and similarly at $u=\infty$), each singular fibre is of additive type. 
Using the classification of Kodaira-N\'{e}ron, the singular fibres are of type IV. 
It is known that when the fibre type is IV, there are $3$ components. 
So $m_{v}-1=3-1=2$, and the N\'{e}ron-Severi formula gives 
$$\operatorname{rank} E\left(\Q(u)\right)=\rho-2-\sum\limits_{v}\left(m_{v}-1\right)=\rho-2-18=\rho-20.$$

Consequently, the generic rank of this isotrivial family can be quite large, making specialized subfamilies candidates for yielding rational points in arithmetic progression longer than those of length $3$ examined in the first part of the section.

\end{document}